\numberwithin{equation}{section}
\newtheorem{theorem}{Theorem}[section]
\newtheorem{lemma}[theorem]{Lemma}
\theoremstyle{definition}
\newtheorem{example}[theorem]{Example}
\newtheorem{remark}[equation]{Remark}
\begin{document}
\title[A generalization of the Bohr-Rogosinski sum]
{A generalization of the Bohr-Rogosinski sum}

\author[Shankey Kumar]{Shankey Kumar}
\address{Shankey Kumar, Department of Mathematics,
	Indian Institute of Technology Indore,
	Indore 453552, India}
\email{shankeygarg93@gmail.com}


\author[S. K. Sahoo]{Swadesh Kumar Sahoo$^*$}
\address{Swadesh Kumar Sahoo, Department of Mathematics,
	Indian Institute of Technology Indore,
	Indore 453552, India}
\email{swadesh.sahoo@iiti.ac.in}

\subjclass[2010]{Primary: 30A10, 30H05; Secondary: 30C45.}
\keywords{Bounded analytic functions, Univalent functions, Bohr radius, Rogosinski radius, Bohr-Rogosinski sum, Subordination.\\
$^*$ The corresponding author}

\begin{abstract}
In this paper, we investigate the Bohr-Rogosinski sum and the classical Bohr sum for analytic functions defined on the unit disk in a general setting. In addition, we discuss a generalization of  the Bohr-Rogosinski sum for a class of analytic functions subordinate to the univalent functions in the unit disk. Several well-known results are observed from the consequences of our main results. 
\end{abstract}

\maketitle

\section{\bf Introduction}
Let $\mathcal{H}$ be the class of all analytic functions defined on the unit disk $\mathbb{D}:=\{z\in \mathbb{C}:|z|<1 \}$. The subclass 
$\mathcal{B}=\{f\in\mathcal{H}: |f(z)|\leq 1\}$,
of $\mathcal{H}$, is our main consideration in this paper. 

The Bohr radius first introduced by Bohr \cite{Bohr14}, who gives that if $f(z)=\sum_{n=0}^{\infty} a_n z^n\in \mathcal{B}$ then
\begin{equation}\label{6eq0.1}
\sum_{n=0}^{\infty} |a_n| r^n\leq 1
\end{equation}
in $|z|=r\leq 1/3$. The constant $1/3$ is known as the {\em Bohr radius}.  Bohr originally obtained the inequality \eqref{6eq0.1} for $r\leq 1/6$ and in the same paper the constant $1/6$ was improved to $1/3$. As pointed out in \cite{Bohr14}, the proof of the sharp radius $1/3$ was suggested by Wiener et al. In the literature, several versions of Bohr's theorem have been studied. One of the versions of Bohr's theorem has been obtained by introducing the term $|a_0|^p$, $0<p\leq 2$, instead of $|a_0|$, with the corresponding radius $p/(2+p)$, see \cite{LP2020,PVW2019}. 

The notion of the Bohr radius was generalized in \cite{Abu,Abu2,Aizenberg07} to include mappings from $\mathbb{D}$ to some other domains in $\mathbb{C}$. Moreover, the Bohr phenomenon for shifted disks and simply connected domains are 
dealt in \cite{AAH21,EPR21,FR10}. 
The Bohr phenomenon for the class of subordinations and the class of
quasi-subordinations are discussed in \cite{BD18} and \cite{AKP19},
respectively.
In \cite{KSS2017}, Kayumov et al. studied the Bohr radius for locally univalent planar harmonic mappings. Various improved forms of the classical Bohr's inequality were investigated by Kayumov and Ponnusamy in \cite{Kayumov18,Kayponn18}. In this sequence, Evdoridis et al. \cite{EPR19} have discussed several improved versions of the Bohr inequality for harmonic mappings. Bohr type inequalities for certain integral operators have been obtained in \cite{Kayponn19,KS2021}. 
To find certain recent results, we refer to \cite{Abu4,AAL20,BD19,Kaypon18,LLP2020,LPW2020,LP2019}
and the references therein. The recent survey article \cite{Abu-M16} and references therein may be good sources for this topic. 

Recently, Kayumov et. al. \cite{Kayponn20} studied the general form of the Bohr sum, which is described as follows:
let $\{\phi_k(r)\}_{k=0}^{\infty}$ be a sequence of non-negative continuous functions in $[0,1)$ such that the series 
$$
\phi_0(r)+\sum_{k=1}^{\infty}\phi_k(r)
$$
converges locally uniformly for $r\in[0,1)$.

\noindent
{\bf Theorem A \cite{Kayponn20}.}
{\em Let $f(z)=\sum_{n=0}^{\infty} a_n z^n \in \mathcal{B}$ and $p\in(0,2]$. If
$$
\phi_0(r)>\frac{2}{p}\sum_{k=1}^{\infty}\phi_k(r)
\quad \mbox{ for $r\in [0,R)$},
$$
where $R$ is the minimal positive root of the equation 
$$
\phi_0(x)=\frac{2}{p}\sum_{k=1}^{\infty}\phi_k(x),
$$
then the following sharp inequality holds:
$$
|a_0|^p \phi_0(r)+\sum_{k=1}^{\infty}|a_k| \phi_k(r)\leq \phi_0(r), \quad \mbox{ for all $r\leq R$}.
$$
In the case when 
$$
\phi_0(x)<\frac{2}{p}\sum_{k=1}^{\infty}\phi_k(x)
$$
in some interval $(R,R+\epsilon)$, the number $R$ cannot be improved. If the functions $\phi_k(x)$ ($k\geq 0$) are smooth functions then the last condition is equivalent to the inequality 
$$
\phi_0^{'}(R)<\frac{2}{p}\sum_{k=1}^{\infty}\phi_k^{'}(R).
$$}

Similar to the Bohr radius, there is a concept of the Rogosinski radius. In \cite{R23}, the Rogosinski radius is defined as follows: if $f(z)=\sum_{n=0}^{\infty} a_n z^n \in \mathcal{B}$ then $|S_M(z)|=|\sum_{n=0}^{M-1} a_n z^n|<1$ for $|z|<1/2$, where $1/2$ is the best possible quantity (see also \cite{LG86,S25}). In \cite{Kayponn21}, Kayumov and Ponnusamy studied the sum 
\begin{equation}\label{6eq0.2}
R_N^f(z):=|f(z)|^p+\sum_{k=N}^{\infty} |a_k| r^k, \ |z|=r, \text{ and } N\in \mathbb{N},
\end{equation}
namely, the {\em Bohr-Rogosinski sum} of $f$ for $p\in\{1,2\}$ and later in \cite{LP2020} this sum has been considered for $p\in (0,2]$. If we choose $N=1$ and $f(0)$ instead of $f(z)$ in the sum then it is easy to see that the Bohr-Rogosinski sum is closely related to the classical Bohr sum. Here, the Bohr-Rogosinski  radius is the largest number $r>0$ such that $R_N^f(z)\leq 1$, known as the {\em Bohr-Rogosinski inequality}, for $|z|\leq r$
and for each $f\in\mathcal{B}$.

For $p\in\{1,2\},$ Liu et al. \cite{Liu18} have considered the Bohr-Rogosinski type sum in the form:
\begin{equation}\label{6eq0.3}
|f(z)|^p+\sum_{k=N}^{\infty}\bigg|\frac{f^k(z)}{k!}\bigg| r^k, \ |z|=r, \text{ and } N\in \mathbb{N},
\end{equation}
which is obtained by replacing $f^{k}(0)$ in \eqref{6eq0.2} with the $k^{th}$ derivative of $f$.
Moreover, the sum \eqref{6eq0.3} has been further generalized in \cite{AlKayPON-20} by replacing $z$ with $z^m$ for a positive integer $m$. 

The main focus of this paper is to consider generalized sums for each of the sums \eqref{6eq0.2} and \eqref{6eq0.3} with an aim to further generalize Theorem A. Concerning these two sums, we present two main results in Section 2 followed by their consequences. Moreover, Section 3 contains a generalization of the Bohr-Rogosinski sum for a class of analytic functions subordinate to univalent functions in the unit disk.

\section{\bf Generalized Bohr-Rogosinski sum for analytic functions}
The following lemma will be instrumental in proving our forthcoming main result.
\begin{lemma}\label{6Lemma1.2}
Let $r\in[0,1)$ and $p\in(0,1]$. Then
$$
Q(x):=1-\bigg(\cfrac{r^m+x}{1+xr^m}\bigg)^p- p\frac{1-r^m}{1+r^m}(1-x)\geq 0, \ \forall \ x\in [0,1) \text{ and } m\in \mathbb{N}.
$$
\end{lemma}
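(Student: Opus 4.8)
The plan is to absorb $r^m$ into a single parameter $a:=r^m\in[0,1)$ (with $r,m$ fixed), reducing the claim to showing
$$Q(x)=1-\left(\frac{a+x}{1+ax}\right)^{p}-p\,\frac{1-a}{1+a}\,(1-x)\ge 0,\qquad x\in[0,1),$$
for every $a\in[0,1)$ and $p\in(0,1]$. The first thing I would record is the boundary value $Q(1)=0$: at $x=1$ the fractional term equals $1$ and the linear term vanishes. This makes a monotonicity argument natural---if I can show that $Q$ is non-increasing on $[0,1]$, then $Q(x)\ge Q(1)=0$ for all $x\in[0,1)$.

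Writing $g(x)=\frac{a+x}{1+ax}$, a direct computation gives $g'(x)=\frac{1-a^2}{(1+ax)^2}>0$, so $g$ increases from $g(0)=a$ to $g(1)=1$; in particular $0\le g(x)\le 1$ on $[0,1]$, which one also sees from the identity $1-g(x)=\frac{(1-a)(1-x)}{1+ax}$. Differentiating $Q$ and pulling out the positive constant $p\frac{1-a}{1+a}$, I expect the derivative to factor as
$$Q'(x)=p\,\frac{1-a}{1+a}\left[\,1-g(x)^{p-1}\,\frac{(1+a)^2}{(1+ax)^2}\,\right],$$
so that the whole problem reduces to verifying that the bracket is $\le 0$, i.e. $g(x)^{p-1}\frac{(1+a)^2}{(1+ax)^2}\ge 1$.

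The heart of the argument is that each factor of this product is separately at least $1$: since $p-1\le 0$ and $0<g(x)\le 1$ we have $g(x)^{p-1}\ge 1$, while $x\le 1$ gives $1+ax\le 1+a$ and hence $\frac{(1+a)^2}{(1+ax)^2}\ge 1$. Their product is therefore $\ge 1$, the bracket is $\le 0$, and so $Q'\le 0$ on $(0,1)$; combined with $Q(1)=0$ and the continuity of $Q$ on $[0,1]$ this yields $Q\ge 0$.

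I do not expect a serious obstacle, since the decisive point is the clean factorization of $Q'$; the only care needed is at the degenerate parameters. When $p=1$ the factor $g^{p-1}$ is identically $1$, and in fact $Q$ collapses to the manifestly nonnegative expression $\frac{a(1-a)(1-x)^2}{(1+a)(1+ax)}$. When $a=0$ and $p<1$ the term $g(x)^{p-1}=x^{p-1}$ blows up as $x\to 0^+$, so $Q'$ is finite only on the open interval; there I would run the monotonicity argument on $(0,1)$ and invoke continuity at the endpoints, with $Q(0)=1-p\ge 0$ serving as a consistency check. As an alternative that sidesteps differentiating the singular term, one may observe that $p\frac{1-a}{1+a}(1-x)$ is precisely the tangent line to $x\mapsto 1-g(x)^{p}$ at $x=1$, so that $Q\ge 0$ amounts to this map lying above its tangent there---the same convexity-type estimate in disguise.
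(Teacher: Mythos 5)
Your proof is correct and follows essentially the same route as the paper: differentiate $Q$, factor out $p\frac{1-r^m}{1+r^m}$, use $\left(\frac{r^m+x}{1+xr^m}\right)^{p-1}\ge 1$ (since $p\le 1$) together with $(1+xr^m)^2\le(1+r^m)^2$ to get $Q'\le 0$, and conclude $Q(x)\ge Q(1)=0$. Your extra remarks (the closed form at $p=1$, the tangent-line interpretation, and the care at $a=0$, $x\to 0^+$) are correct but not needed beyond what the paper does.
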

\begin{proof}
Differentiation of $Q(x)$ with respect to $x$ gives
\begin{align*}
Q'(x)&=-p\bigg(\cfrac{r^m+x}{1+xr^m}\bigg)^{p-1}\frac{(1-r^{2m})}{(1+xr^m)^2}+p\frac{1-r^m}{1+r^m}\\
&\leq p\frac{1-r^m}{1+r^m}\Bigg[1-\bigg(\frac{r^m+x}{1+xr^m}\bigg)^{p-1}\Bigg]\leq 0
\end{align*}
for all $x\in [0,1)$ and $r\in[0,1)$. It leads to the fact that $Q$ is a decreasing function of $x$ for $r\in[0,1)$.
So, we have 
$$
Q(x)\geq \lim\limits_{x\rightarrow 1} Q(x)= 0.
$$
This completes the proof.
\end{proof}

Now we are ready to establish our first main result.

\begin{theorem}\label{6theorem1.1}
Let $\{\nu_k(r)\}_{k=0}^{\infty}$ be a sequence of non-negative continuous functions in $[0,1)$ such that the series 
$$
\nu_0(r)+\sum_{k=1}^{\infty}\nu_k(r)
$$
converges locally uniformly with respect to $r\in[0,1)$. 
Let  ${\rm Re}\, f(z) ={\rm Re}\, \sum_{n=0}^{\infty} a_n z^n \leq 1$ and $p\in(0,1]$. If
\begin{equation}\label{6eq1.1}
\nu_0(r)>\frac{2}{p}\frac{(1+r^m)}{(1-r^m)}\sum_{k=1}^{\infty}\nu_k(r)
\end{equation}
then the following sharp inequality holds:
$$
A_f(\nu,p,r,m):=|f(z^m)|^p \nu_0(r)+\sum_{k=1}^{\infty}|a_k| \nu_k(r)\leq \nu_0(r), \mbox{ for all $|z|=r\leq R_1$},
$$
where $R_1$ is the minimal positive root of the equation 
$$
\nu_0(x)=\frac{2}{p}\frac{(1+x^m)}{(1-x^m)}\sum_{k=1}^{\infty}\nu_k(x).
$$
In the case when 
$$
\nu_0(x)<\frac{2}{p}\frac{(1+x^m)}{(1-x^m)}\sum_{k=1}^{\infty}\nu_k(x)
$$
in some interval $(R_1,R_1+\epsilon)$, the number $R_1$ cannot be improved. 
\end{theorem}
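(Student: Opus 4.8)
The plan is to split $A_f(\nu,p,r,m)$ into its two natural pieces, the ``Rogosinski'' term $|f(z^m)|^p\,\nu_0(r)$ and the ``Bohr'' tail $\sum_{k\ge1}|a_k|\nu_k(r)$, to estimate each in terms of the single real parameter $x:=\operatorname{Re}a_0\in[0,1)$, and then to feed the two estimates into Lemma~\ref{6Lemma1.2}. Throughout I write $S(r):=\sum_{k=1}^{\infty}\nu_k(r)$, which converges locally uniformly by hypothesis, and I set $a_0=f(0)$. One may assume $f$ is non-constant with $x=\operatorname{Re}a_0\in[0,1)$: the boundary case $\operatorname{Re}a_0=1$ forces $f\equiv 1$ by the minimum principle applied to the nonnegative harmonic function $1-\operatorname{Re}f$, and the constant case is trivial.

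First I would record the coefficient estimate coming from $\operatorname{Re}f\le 1$. The function $h:=1-f$ satisfies $\operatorname{Re}h\ge0$ on $\mathbb D$ with $h(0)=1-a_0$, so the Herglotz representation
\[
h(z)=i\,\operatorname{Im}h(0)+\int_0^{2\pi}\frac{e^{i\theta}+z}{e^{i\theta}-z}\,d\mu(\theta),\qquad \mu\ge0,\ \ \mu(\text{total})=\operatorname{Re}h(0)=1-x,
\]
gives, for the Taylor coefficients $h(z)=\sum_n c_n z^n$, the Carath\'eodory-type bound $|c_n|\le 2\operatorname{Re}h(0)$ for $n\ge1$. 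Since $c_n=-a_n$ for $n\ge1$, this reads $|a_k|\le 2(1-x)$, whence the tail is controlled by
\[
\sum_{k=1}^{\infty}|a_k|\,\nu_k(r)\le 2(1-x)\,S(r).
\]

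The term $|f(z^m)|^p$ is where I expect the main obstacle to lie. Because $f$ maps $\mathbb D$ into the half-plane $\{\operatorname{Re}w<1\}$ with $f(0)=a_0$, composing $f$ with the Riemann map of that half-plane yields a Schwarz function, and the associated subordination/Schwarz--Pick estimate is meant to control $|f|$ on $|z|=r$ by a M\"obius expression in $x$ and $r^m$; the target of this step is
\[
|f(z^m)|\le \frac{r^m+x}{1+x\,r^m},\qquad |z|=r.
\]
Pinning down this modulus bound in exactly the shape required by Lemma~\ref{6Lemma1.2}, and checking that the extremal direction is the one captured by $\tfrac{r^m+x}{1+xr^m}$, is the delicate point and is where $p\in(0,1]$ and $x\in[0,1)$ enter. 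Granting it, Lemma~\ref{6Lemma1.2} applied with this $x$ upgrades the estimate to
\[
|f(z^m)|^p\le\Big(\tfrac{r^m+x}{1+xr^m}\Big)^p\le 1-p\,\frac{1-r^m}{1+r^m}\,(1-x).
\]

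Finally I would assemble the pieces. Multiplying the last display by $\nu_0(r)\ge0$ and adding the tail bound gives
\[
A_f(\nu,p,r,m)\le \nu_0(r)-(1-x)\Big[p\,\tfrac{1-r^m}{1+r^m}\,\nu_0(r)-2S(r)\Big].
\]
Since $1-x>0$, the desired inequality $A_f\le\nu_0(r)$ holds precisely when the bracket is nonnegative, that is, when $\nu_0(r)\ge\frac{2}{p}\frac{1+r^m}{1-r^m}S(r)$; by \eqref{6eq1.1} together with the definition of $R_1$ as the minimal positive root of $\nu_0(x)=\frac{2}{p}\frac{1+x^m}{1-x^m}S(x)$, this is valid for all $r\le R_1$. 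For the sharpness assertion, on an interval $(R_1,R_1+\epsilon)$ where the reversed inequality $\nu_0<\frac{2}{p}\frac{1+r^m}{1-r^m}S$ holds the bracket turns negative, so I would exhibit the extremal function attaining equality in the coefficient estimate (the natural candidate being $f_a(z)=1-(1-a)\frac{1+z}{1-z}$, for which $|a_k|=2(1-a)$) and let $a\to1^-$, so that both estimates become (asymptotically) tight and $A_{f_a}>\nu_0$ just past $R_1$; this shows $R_1$ cannot be improved.
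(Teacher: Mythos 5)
Your overall strategy is the same as the paper's: a Carath\'eodory coefficient bound, a M\"obius-type bound on $|f(z^m)|$, and Lemma~\ref{6Lemma1.2} to merge the two, followed by a one-parameter extremal family with $a\to1^-$. But the step you yourself flag as ``delicate'' is a genuine gap, not a routine verification. The bound $|f(z^m)|\le (r^m+x)/(1+xr^m)$ cannot come from the Riemann map of the half-plane $\{\operatorname{Re}w<1\}$: that domain is unbounded, so subordination to it gives no upper bound on $|f|$ of this form at all (for $f(z)=-2z/(1-z)$ one has $\operatorname{Re}f\le1$ and $f(0)=0$, yet $|f(r)|=2r/(1-r)>r$). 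Moreover, with $x=\operatorname{Re}a_0$ the claimed bound is strictly stronger than Schwarz--Pick and fails even for $f\in\mathcal{B}$ (take $a_0$ purely imaginary). The paper asserts the same estimate with $a=|a_0|$ in place of your $x$, where it is exactly the invariant form of Schwarz's lemma for $f\in\mathcal{B}$ --- that is, the argument really uses $|f|\le1$ rather than merely $\operatorname{Re}f\le1$. As written, your half-plane route does not close this step.

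The sharpness argument also fails with your choice of extremal function. For $f_a(z)=1-(1-a)\tfrac{1+z}{1-z}$ the coefficient bound is tight, but $1-f_a(r^m)=(1-a)\tfrac{1+r^m}{1-r^m}$, so
$$
A_{f_a}(\nu,p,r,m)=\nu_0(r)+(1-a)\Bigg[2\sum_{k=1}^{\infty}\nu_k(r)-p\,\frac{1+r^m}{1-r^m}\,\nu_0(r)\Bigg]+O((1-a)^2),
$$
and on $(R_1,R_1+\epsilon)$ the bracket is still negative: at $R_1$ one has $2\sum_k\nu_k=p\tfrac{1-R_1^m}{1+R_1^m}\nu_0<p\tfrac{1+R_1^m}{1-R_1^m}\nu_0$, and this persists by continuity. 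Hence $f_a$ does not violate the inequality just past $R_1$, and no contradiction with optimality is obtained. You need a function for which the coefficient bound and the modulus bound from Lemma~\ref{6Lemma1.2} become tight simultaneously; the paper takes $g(z)=(a+z)/(1+az)$, whose deficit $1-g(r^m)=(1-a)(1-r^m)/(1+ar^m)$ yields the first-order term $(1-a)\big[2\sum_k\nu_k-p\tfrac{1-r^m}{1+r^m}\nu_0\big]$, which is positive precisely on $(R_1,R_1+\epsilon)$.
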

\begin{proof} Let $a=|a_0|<1$.
The given condition ${\rm Re}\,f(z)\leq 1$ gives $|a_k|\leq 2(1-a)$ which provides
\begin{align*}
A_f(\nu,p,r,m)&\leq |f(z^m)|^p \nu_0(r)+2(1-a)\sum_{k=1}^{\infty} \nu_k(r)\\
&= \nu_0(r)+2(1-a)\Bigg[\sum_{k=1}^{\infty} \nu_k(r)-\frac{1-|f(z^m)|^p}{2(1-a)}\nu_0(r)\Bigg].
\end{align*}	
Since $f(0)=a_0$ and ${\rm Re}\,f(z)\leq 1$, then we have 
$$
|f(z)|\leq \frac{r+a}{1+ar}, \ |z|=r.
$$
Then by applying Lemma \ref{6Lemma1.2} we have
$$
A_f(\nu,p,r,m)\leq \nu_0(r)+2(1-a)\Bigg[\sum_{k=1}^{\infty} \nu_k(r)-\frac{p}{2}\frac{1-r^m}{1+r^m}\nu_0(r)\Bigg].
$$
The equation \eqref{6eq1.1} provides us
$$
A_f(\nu,p,r,m)\leq \nu_0(r),\quad \mbox{for all $ r\leq R_1$}.
$$
Now, let us prove the sharpness part. Consider a function
$$
g(z)=\frac{a+z}{1+az}=a-(1-a^2)\sum_{k=1}^{\infty}(-1)^k a^{k-1} z^{k},
$$
where $z\in\mathbb{D}$ and $a\in [0,1)$.
Then we obtain
\begin{align*}
A_g(\nu,p,r,m)=& \{g(r^m)\}^p \nu_0(r)+(1-a^2)\sum_{k=1}^{\infty}a^{k-1} \nu_k(r)\\
=& \nu_0(r)+(1-a)\Bigg[2\sum_{k=1}^{\infty} \nu_k(r)-p\frac{1-r^m}{1+r^m}\nu_0(r)\Bigg]\\
&+(1-a)\Bigg[\sum_{k=1}^{\infty}a^{k-1}(1+a) \nu_k(r)-2\sum_{k=1}^{\infty} \nu_k(r)\Bigg]\\
&+\Bigg[p(1-a)\frac{1-r}{1+r}+\{g(r^m)\}^p-1\Bigg]\nu_0(r)\\
=&\nu_0(r)+(1-a)\Bigg[2\sum_{k=1}^{\infty} \nu_k(r)-p\frac{1-r^m}{1+r^m}\nu_0(r)\Bigg]+O((1-a)^2)
\end{align*}
as $a$ tends to $1^-$. Also, we have
$$
2\sum_{k=1}^{\infty} \nu_k(r)>p\frac{1-r^m}{1+r^m}\nu_0(r)
$$
for $r\in(R_1,R_1+\epsilon)$. This concludes the proof.
\end{proof}
\begin{remark}
Theorem \ref{6theorem1.1}, with the limit $m\to \infty$, is a generalization of Theorem A for $p\in (0,1]$. Incidentally, we came to know through a private communication that Theorem~\ref{6theorem1.1} for the case $m\to \infty$ is recently obtained in \cite{LLP}.
\end{remark}
Following are the consequences of Theorem \ref{6theorem1.1}.
\begin{example}
For $\nu_0=1$, $\nu_n=r^n$, $n\geq N\in\mathbb{N}$ and $\nu_n=0$, $1\leq n< N$, Theorem \ref{6theorem1.1} gives
$$
|f(z^m)|^p+\sum_{n=N}^{\infty} |a_n| r^n\leq 1, \text{ for } r\leq R_1^{m,N}(p),
$$
where $R_1^{m,N}(p)$ is the positive root of the equation $2x^N(1+x^m)-p(1-x)(1-x^m)=0$. The radius $R_1^{m,N}(p)$ is best possible. The case $|f(z)|\leq 1$ and $p=1$ are already presented in \cite{Kayponn21}. Further, the case $m=1$, $p\in(0,1]$ and $|f(z)|\leq 1$ separately handled in \cite{LP2020}. The roots $R_1^{m,N}(p)$ for $p=1,2$, $N=5,10,15$, and $m=1,2,3,4$ given in Table \ref{table2.3}.   
\end{example}

\begin{table}[H]
	\begin{center}
		\begin{tabular}{|c|c|c|c|c|c|c|}
			\hline
			$m$ & $R_1^{m,5}(1)$ & $R_1^{m,10}(1)$ & $R_1^{m,15}(1)$ & $R_1^{m,5}(2)$ & $R_1^{m,10}(2)$ & $R_1^{m,15}(2)$\\
			\hline
			$1$ &$0.568466$&$0.696983$&$0.760135$&$0.61803$&$0.729092$&$0.78422$\\
			\hline
			$2$ &$0.614046$&$0.727963$&$0.783716$&$0.664727$&$0.759979$&$0.807561$\\
			\hline
			$3$ &$0.638474$&$0.745208$&$0.797015$&$0.690133$&$0.777215$&$0.820717$\\
			\hline
			$4$ &$0.653901$&$0.756719$&$0.80606$&$0.706669$&$0.788828$&$0.8297$\\
			\hline
		\end{tabular}
		\vspace*{0.2cm}
\caption{Computation of $R_1^{m,N}(p)$ for $p=1,2$, $N=5,10,15$, and $m=1,2,3,4$}\label{table2.3}
	\end{center}
\end{table}
\begin{example}
After letting $\nu_{2n}=r^{2n}$ and $\nu_{2n+1}=0$ ($n\geq 0$) in Theorem $\ref{6theorem1.1}$ we obtain 
$$
|f(z^m)|^p+\sum_{n=1}^{\infty} |a_{2n}| r^{2n}\leq 1, \text{ for } r\leq R_2^m(p),
$$
where $R_2^m(p)$ is the positive root of the equation $2x^2(1+x^m)-p(1-x^2)(1-x^m)=0$. The radius $R_2^m(p)$ is best possible. The choices $p=1$, $m=1$ with $|f(z)|\leq 1$ were studied in \cite{Liu18}. We list a few initial roots  $R_2^m(p)$ for $p=1,2$ in Table~\ref{table1}. 
\end{example}
\begin{table}[H]
\begin{tabular}{|c|c|c|} 
\hline
$m$ & $R_2^m(1)$& $R_2^m(2)$\\ \hline
$1$ & $0.41421$ & $0.5$ \\ \hline 
$2$ & $0.48587$ & $0.57735$\\ \hline
$3$ & $0.52236$ & $0.61803$\\ \hline
$4$ & $0.54369$ & $0.64359$\\ \hline
\end{tabular}
\vspace*{0.2cm}
\caption{Computation of the roots $R_2^m(p)$ for $p=1,2$ and $m=1,2,3,4$.} \label{table1}
\end{table}
\begin{example}
Let us consider $\nu_0=1$, $\nu_{2n}=0$ and $\nu_{2n-1}=r^{2n-1}$ ($n\geq 1$) in Theorem \ref{6theorem1.1} then we have
$$
|f(z^m)|^p+\sum_{n=1}^{\infty} |a_{2n-1}| r^{2n-1}\leq 1, \text{ for } r\leq R_3^m(p),
$$
where $R_3^m(p)$ is the minimal positive root of the equation $2x(1+x^m)-p(1-x^2)(1-x^m)=0$. The radius $R_3^m(p)$ is best possible. Table~\ref{table2} describes a few initial roots $R_3^m(p)$ for $p=1,2$.
\end{example}
\begin{table}[H]
	\begin{tabular}{|c|c|c|} 
		\hline
		$m$ & $R_3^m(1)$& $R_3^m(2)$\\
		\hline
		$1$ & $0.26795$ & $0.38197$\\ \hline
		$2$ & $0.34601$ & $0.48053$\\ \hline
		$3$ & $0.38197$ & $0.53101$ \\ \hline
		$4$ & $0.399389$& $0.56127$ \\
		\hline
	\end{tabular}
\vspace*{0.2cm}
\caption{The roots $R_3^m(p)$ for $p=1,2$ and $m=1,2,3,4$} \label{table2}
\end{table}
\begin{example}
The choices $\nu_0=1$, $\nu_n=(n+1)r^n$, $n\geq N\in\mathbb{N}$ and $\nu_n=0$, $1\leq n< N$, in Theorem \ref{6theorem1.1} provide
$$
|f(z^m)|^p+\sum_{n=N}^{\infty}(n+1) |a_n| r^n\leq 1, \text{ for } r\leq R_4^{m,N}(p),
$$
where $R_4^{m,N}(p)$ is the positive root of the equation $2x^N(1+N-Nx)(1+x^m)-p(1-x)^2(1-x^m)=0$. The radius $R_4^{m,N}(p)$ is best possible. The values of $R_4^{m,N}(p)$ have been computed in Table~\ref{table2.6}
for the choices $p=1,2$, $N=5,10,15$, and $m=1,2,3,4$.

\begin{table}[H]
\begin{center}
\begin{tabular}{|c|c|c|c|c|c|c|}
\hline
$m$ & $R_4^{m,5}(1)$ & $R_4^{m,10}(1)$ & $R_4^{m,15}(1)$ & $R_4^{m,5}(2)$ & $R_4^{m,10}(2)$ & $R_4^{m,15}(2)$\\
\hline
$1$ &$0.438303$&$0.581973$&$0.659598$&$0.48227$&$0.612325$&$0.683058$\\
\hline
$2$ &$0.474555$&$0.609805$&$0.681863$&$0.521603$&$0.641053$&$0.705692$\\
\hline
$3$ &$0.491663$&$0.624482$&$0.694054$&$0.54118$&$0.656495$&$0.718209$\\
\hline
$4$ &$0.500617$&$0.633465$&$0.701928$&$0.552248$&$0.66623$&$0.726424$\\
\hline
\end{tabular}
\vspace*{0.2cm}
\caption{Computation of $R_4^{m,N}(p)$ for $p=1,2$, $N=5,10,15$, and $m=1,2,3,4$}\label{table2.6}
\end{center}
\end{table}

\end{example}
\begin{example}It is easy to calculate 
$$
\sum_{n=N}^{\infty}n r^n=\frac{r^N[N(1-r)+r]}{(1-r)^2}.
$$
Then, for $\nu_0=1$, $\nu_n=nr^n$, $n\geq N\in\mathbb{N}$ and $\nu_n=0$, $1\leq n< N$, Theorem \ref{6theorem1.1} obtains
$$
|f(z^m)|^p+\sum_{n=N}^{\infty}n |a_n| r^n\leq 1, \text{ for } r\leq R_5^{m,N}(p),
$$
where $R_5^{m,N}(p)$ is the positive root of the equation $2x^N[N(1-x)+x](1+x^m)=p(1-x^m)$. The radius $R_5^{m,N}(p)$ is best possible. The roots $R_5^{m,N}(p)$ have been computed in Table~\ref{table2.71}
for the choices $p=1,2$, $N=5,10,15$, and $m=1,2,3,4$.

\begin{table}[H]
	\begin{center}
		\begin{tabular}{|c|c|c|c|c|c|c|}
			\hline
			$m$ & $R_5^{m,5}(1)$ & $R_5^{m,10}(1)$ & $R_5^{m,15}(1)$ & $R_5^{m,5}(2)$ & $R_5^{m,10}(2)$ & $R_5^{m,15}(2)$\\
			\hline
			$1$ &$0.552822$&$0.689323$&$0.75805$&$0.621637$&$0.733723$&$0.79105$\\
			\hline
			$2$ &$0.615996$&$0.732181$&$0.790394$&$0.691438$&$0.77864$&$0.824341$\\
			\hline
			$3$ &$0.652463$&$0.757175$&$0.809302$&$0.732289$&$0.804895$&$0.843792$\\
			\hline
			$4$ &$0.677166$&$0.774533$&$0.82254$&$0.760641$&$0.823255$&$0.857442$\\
			\hline
		\end{tabular}
		\vspace*{0.2cm}
		\caption{The values of $R_5^{m,N}(p)$ for $p=1,2$, $N=5,10,15$, and $m=1,2,3,4$}\label{table2.71}
	\end{center}
\end{table}

Also, we have
$$
\sum_{n=N}^{\infty}n^2 r^n=\frac{r^N[(r+N)^2+r+N^2r^2-2Nr(r+N)]}{(1-r)^3}.
$$
Then, for $\nu_0=1$, $\nu_n=n^2r^n$, $n\geq N\in\mathbb{N}$ and $\nu_n=0$, $1\leq n< N$, in Theorem \ref{6theorem1.1}
$$
|f(z^m)|^p+\sum_{n=N}^{\infty}n^2 |a_n| r^n\leq 1, \text{ for } r\leq R_6^{m,N}(p),
$$
where $R_6^{m,N}(p)$ is the positive root of the equation 
$$
x^N[(x+N)^2+x+N^2x^2-2Nx(x+N)](1+x^m)=p(1-x^m)(1-x)^3.
$$ 
The radius $R_6^{m,N}(p)$ is best possible. Table~\ref{table2.72} describes a few initial roots $R_6^m(p)$ for $p=1,2$ and $N=5,10,15$.
\end{example}

\begin{table}[H]
\begin{center}
\begin{tabular}{|c|c|c|c|c|c|c|}
\hline
$m$ & $R_6^{m,5}(1)$ & $R_6^{m,10}(1)$ & $R_6^{m,15}(1)$ & $R_6^{m,5}(2)$ & $R_6^{m,10}(2)$ & $R_6^{m,15}(2)$\\
\hline
$1$ &$0.384343$&$0.512948$&$0.591745$&$0.423699$&$0.540931$&$0.613822$\\ \hline
$2$ &$0.414554$&$0.537391$&$0.612014$&$0.456968$&$0.566525$&$0.634665$\\ \hline
$3$ &$0.427417$&$0.549435$&$0.622616$&$0.472121$&$0.579505$&$0.645746$\\ \hline
$4$ &$0.433293$&$0.556135$&$0.629021$&$0.479709$&$0.587025$&$0.652601$\\ \hline
\end{tabular}
\vspace*{0.2cm}
\caption{Computation of $R_6^{m,N}(p)$ for $p=1,2$, $N=5,10,15$, and $m=1,2,3,4$}\label{table2.72}
	\end{center}
\end{table}
The next consequence is proved in \cite{Liu18}.
\begin{example}
Theorem \ref{6theorem1.1} gives
$$
|f(z)|+\sum_{k=1}^{\infty} |a_{kn}| r^{kn}\leq 1, \text{ for } r\leq R_7^n,
$$
where $R_7^n$ is the positive root of the equation $2x^n(1+x)-(1-x)(1-x^n)=0$. The radius $R_7^n$ is best possible. For $n=1,2,3,4$, the roots $R_7^n$ are presented in Table~\ref{table3}.
\end{example}
\begin{table}[H]
	\begin{tabular}{|c|c|c|c|} 
		\hline
		$n$ & $R_7^n$\\
		\hline
		$1$ & $0.23607$ \\ \hline
		$2$ & $0.41421$ \\ \hline
		$3$  & $0.51624$ \\ \hline
		$4$  & $0.58378$\\
		\hline
	\end{tabular}
	\vspace*{0.2cm}
	\caption{The roots $R_7^n$ for $n=1,2,3,4.$} \label{table3}
\end{table}

\begin{lemma}\label{6Lemma1.1}
	Let $r\in[0,1)$ and $p\in(0,2]$. Then
	$$
	P(x):=1-\bigg(\cfrac{r^m+x}{1+xr^m}\bigg)^p- \frac{p}{2}\frac{1-r^m}{1+r^m}(1-x^2)\geq 0
	$$
	for all $x\in [0,1)$ and $m\in \mathbb{N}$.
\end{lemma}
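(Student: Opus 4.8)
The plan is to avoid imitating the monotonicity argument used for Lemma~\ref{6Lemma1.2}, which becomes awkward here: once $p$ is allowed up to $2$, the derivative of $P$ no longer has an obvious sign, and showing $P'(x)\le 0$ amounts to an inequality that is itself not transparent. Instead I would split the target into two cleaner pieces. Writing $t:=r^m\in[0,1)$ and $u=u(x):=\frac{t+x}{1+xt}\in[0,1]$, the claim $P(x)\ge 0$ is precisely the inequality $1-u^p\ge \frac{p}{2}\,\frac{1-t}{1+t}\,(1-x^2)$, and I would prove it via the chain $1-u^p\ge \frac{p}{2}(1-u^2)\ge \frac{p}{2}\,\frac{1-t}{1+t}\,(1-x^2)$.

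First I would establish the elementary Bernoulli-type inequality $1-u^p\ge \frac{p}{2}(1-u^2)$ for all $u\in[0,1]$ and $p\in(0,2]$. Fixing $p$ and differentiating the difference $\psi(u):=1-u^p-\frac{p}{2}(1-u^2)$ gives $\psi'(u)=pu^{p-1}\bigl(u^{2-p}-1\bigr)\le 0$ on $(0,1)$, since $2-p\ge 0$ forces $u^{2-p}\le 1$. As $\psi$ is continuous with $\psi(1)=0$, this monotonicity yields $\psi(u)\ge \psi(1)=0$ on $[0,1]$, the endpoint $u=0$ being the maximum where $\psi(0)=1-\tfrac{p}{2}\ge 0$. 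This is the only genuinely analytic step, and it is exactly where the hypothesis $p\le 2$ enters.

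Next I would record the Möbius identities $1-u=\frac{(1-t)(1-x)}{1+xt}$ and $1+u=\frac{(1+t)(1+x)}{1+xt}$, whose product gives the clean factorization $1-u^2=\frac{(1-t^2)(1-x^2)}{(1+xt)^2}$. Comparing this with the target bound, it remains only to check $\frac{1-t^2}{(1+xt)^2}\ge \frac{1-t}{1+t}$; clearing the positive denominators reduces this to $(1-t)(1+t)^2\ge (1-t)(1+xt)^2$, i.e.\ $(1+t)^2\ge (1+xt)^2$, which holds because $1+xt\le 1+t$ whenever $x\le 1$ and $t\ge 0$.

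Combining the two estimates gives $1-u^p\ge \frac{p}{2}(1-u^2)=\frac{p}{2}\,\frac{(1-t^2)(1-x^2)}{(1+xt)^2}\ge \frac{p}{2}\,\frac{1-t}{1+t}\,(1-x^2)$, that is $P(x)\ge 0$, for every $x\in[0,1)$, $m\in\mathbb{N}$ and $p\in(0,2]$. I expect the main (indeed the only) obstacle to be the Bernoulli-type estimate of the first step; once it is secured, the rest is purely the identities for $1\pm u$ together with the trivial bound $x\le 1$. As a consistency check, for $p\in(0,1]$ one could alternatively deduce the statement directly from Lemma~\ref{6Lemma1.2} via $\tfrac12(1-x^2)\le 1-x$, but the decomposition above dispenses with any case split and handles the full range $p\in(0,2]$ at once.
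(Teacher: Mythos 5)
Your proof is correct, but it takes a genuinely different route from the paper's. The paper in fact uses exactly the monotonicity argument you set aside: it computes $P'(x)=-p\bigl(\frac{r^m+x}{1+xr^m}\bigr)^{p-1}\frac{1-r^{2m}}{(1+xr^m)^2}+px\,\frac{1-r^m}{1+r^m}$, bounds $\frac{1-r^{2m}}{(1+xr^m)^2}\ge\frac{1-r^m}{1+r^m}$, and then uses $u^{p-1}\ge u$ for $u\in[0,1]$ and $p\le 2$ to obtain $P'(x)\le p\frac{1-r^m}{1+r^m}\cdot\frac{r^m(x^2-1)}{1+xr^m}\le 0$, whence $P(x)\ge\lim_{x\to1^-}P(x)=0$. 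So the derivative is not as intractable as you feared: the extra factor of $x$ produced by differentiating $1-x^2$ is precisely what lets the step $u^{p-1}\ge u$ close the argument for the full range $p\in(0,2]$. Your alternative decomposition $1-u^p\ge\frac{p}{2}(1-u^2)\ge\frac{p}{2}\,\frac{1-t}{1+t}\,(1-x^2)$ is nevertheless sound in every step: the Bernoulli-type estimate is correctly verified via $\psi'(u)=pu^{p-1}(u^{2-p}-1)\le0$ and $\psi(1)=0$; the M\"obius identities and the resulting factorization $1-u^2=\frac{(1-t^2)(1-x^2)}{(1+xt)^2}$ are exact; and the comparison $(1+xt)^2\le(1+t)^2$ is trivial. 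What your version buys is a direct pointwise estimate rather than a limit evaluation, an exact identity for $1-u^2$ in place of an inequality on $P'$, and a clean isolation of the single place where $p\le2$ is used; your closing remark that the $p\in(0,1]$ case follows from Lemma~\ref{6Lemma1.2} via $\frac12(1-x^2)\le 1-x$ is also correct. The paper's version is marginally shorter only because it reuses the template of Lemma~\ref{6Lemma1.2} essentially verbatim.
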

\begin{proof}
	The above inequality can easily be proved by using the following steps: By taking the differentiation of $P(x)$ with respect to $x$, we obtain 
	\begin{align*}
	P'(x)&=-p\bigg(\cfrac{r^m+x}{1+xr^m}\bigg)^{p-1}\frac{(1-r^{2m})}{(1+xr^m)^2}+px\frac{1-r^m}{1+r^m}\\
	&\leq p\frac{1-r^m}{1+r^m}\Bigg[x-\bigg(\frac{r^m+x}{1+xr^m}\bigg)^{p-1}\Bigg]\\
	&\leq p\frac{1-r^m}{1+r^m}\Bigg[x-\frac{r^m+x}{1+xr^m}\Bigg]= p\frac{1-r^m}{1+r^m}\Bigg[\frac{r^m(x^2-1)}{1+xr^m}\Bigg]\leq 0
	\end{align*}
	for all $x\in [0,1)$ and $r\in[0,1)$. Hence $P$ is a decreasing function of $x$ for all $r\in[0,1)$.
	Now, it is easy to calculate 
	$$
	P(x)\geq \lim\limits_{x\rightarrow 1^-} P(x)= 0.
	$$
	This completes the proof of the Lemma.
\end{proof}
The following lemma is due to Ruscheweyh
\cite[Theorem~2]{R85} (see also \cite[Theorem~4.5, p~53]{AW09}).
\begin{lemma}\label{6Lemma1.3}
Let $f \in \mathcal{B}$. Then, for all $k=1,2,\dots$, we have 
$$
\frac{|f^k(z)|}{k!}\leq \frac{(1+|z|)^{k-1}}{(1-|z|^2)^k}(1-|f(z)|^2), \ |z|<1. 
$$
\end{lemma}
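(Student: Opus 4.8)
The plan is to deduce the estimate from the classical Schwarz--Pick inequality by composing $f$ with a disk automorphism and then reading off a single Taylor coefficient. The conceptual heart is the coefficient bound: if $h(w)=\sum_{n\geq 0}c_n w^n\in\mathcal{B}$, then $|c_n|\leq 1-|c_0|^2$ for every $n\geq 1$. For $n=1$ this is exactly Schwarz--Pick at the origin: the automorphism $\phi(w)=(w-c_0)/(1-\overline{c_0}w)$ turns $h$ into $\phi\circ h\in\mathcal{B}$ vanishing at $0$, and Schwarz's lemma gives $|(\phi\circ h)'(0)|\leq 1$, i.e.\ $|c_1|\leq 1-|c_0|^2$. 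For general $n$ I would reduce to this case by a rotation-averaging trick: with $\omega=e^{2\pi i/n}$ the average $H(w)=\tfrac1n\sum_{j=0}^{n-1}h(\omega^j w)$ lies in $\mathcal{B}$ and involves only powers $w^{mn}$, so $H(w)=\widehat H(w^n)$ for some $\widehat H\in\mathcal{B}$ with $\widehat H(0)=c_0$ and $\widehat H'(0)=c_n$; applying the $n=1$ bound to $\widehat H$ yields $|c_n|\leq 1-|c_0|^2$.

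With this in hand, fix $z\in\mathbb{D}$ and set $g=f\circ\psi_z$, where $\psi_z(w)=(z+w)/(1+\overline{z}w)$ is the automorphism sending $0$ to $z$. Then $g\in\mathcal{B}$ with $g(0)=f(z)$, so its Taylor coefficients $b_n$ obey $|b_n|\leq 1-|f(z)|^2$ for $n\geq 1$. The next step is to recover $f^{(k)}(z)$ from the $b_n$ by inverting the automorphism: writing $f(\zeta)=g(\sigma(\zeta))$ with $\sigma(\zeta)=(\zeta-z)/(1-\overline{z}\zeta)$ and substituting $\zeta=z+t$, one computes
$$
\sigma(z+t)=\frac{t}{(1-|z|^2)(1-\mu t)},\qquad \mu=\frac{\overline{z}}{1-|z|^2},
$$
so that $\sigma(z+t)^n=(1-|z|^2)^{-n}t^n(1-\mu t)^{-n}$. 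Expanding $(1-\mu t)^{-n}$ by the binomial series and collecting the coefficient of $t^k$ in $f(z+t)=\sum_{n\geq 0}b_n\,\sigma(z+t)^n$ gives
$$
\frac{f^{(k)}(z)}{k!}=\sum_{n=1}^{k}\binom{k-1}{n-1}\frac{\overline{z}^{\,k-n}}{(1-|z|^2)^{k}}\,b_n .
$$

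Finally I would take absolute values, insert $|b_n|\leq 1-|f(z)|^2$, and use $|\mu|^{k-n}/(1-|z|^2)^n=|z|^{k-n}/(1-|z|^2)^k$ together with the binomial identity $\sum_{n=1}^{k}\binom{k-1}{n-1}|z|^{k-n}=(1+|z|)^{k-1}$ to arrive at the claimed bound. I expect the main obstacle to be purely bookkeeping: carrying out the coefficient extraction in the third step cleanly and recognizing the binomial sum, since the only genuinely analytic input is the elementary coefficient estimate, which itself reduces to Schwarz--Pick.
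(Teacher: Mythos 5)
The paper does not prove this lemma at all; it is quoted directly from Ruscheweyh \cite{R85}, so there is no in-paper argument to compare against. Your proof is correct and complete --- the coefficient bound $|c_n|\le 1-|c_0|^2$ via Schwarz--Pick plus Wiener's rotation-averaging, the conjugation $g=f\circ\psi_z$, the expansion $\sigma(z+t)^n=(1-|z|^2)^{-n}t^n(1-\mu t)^{-n}$ giving $\frac{f^{(k)}(z)}{k!}=\sum_{n=1}^{k}\binom{k-1}{n-1}\frac{\overline{z}^{\,k-n}}{(1-|z|^2)^{k}}b_n$, and the identity $\sum_{n=1}^{k}\binom{k-1}{n-1}|z|^{k-n}=(1+|z|)^{k-1}$ all check out --- and it is essentially Ruscheweyh's original argument, so your route coincides with the cited source rather than with anything in this paper.
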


Our second main result is presented below.
\begin{theorem}\label{6theorem1.2}
Let $\{\psi_k(r)\}_{k=0}^{\infty}$ be a sequence of non negative continuous functions in $[0,1)$ such that the series 
$$
\psi_0(r)+\sum_{k=1}^{\infty}\frac{(1+r)^{k-1}}{(1-r^2)^k}\psi_k(r)
$$
converges locally uniformly with respect to $r\in[0,1)$.
Let $f(z)=\sum_{n=0}^{\infty} a_n z^n \in \mathcal{B}$ and $p\in(0,2]$. If
\begin{equation}\label{6eq1.3}
\psi_0(r)>\frac{2}{p}\sum_{k=1}^{\infty}\frac{(1+r^m)^{k-1}}{(1-r^{2m})^k}\psi_k(r)
\end{equation}
then the following sharp inequality holds:
$$
B_f(\psi,p,r,m):=|f(z^m)|^p \psi_0(r)+\sum_{k=1}^{\infty}\bigg|\frac{f^k(z^m)}{k!}\bigg| \psi_k(r)\leq \psi_0(r), \mbox{ for all $|z|=r\leq R_2$},
$$
where $R_2$ is the minimal positive root of the equation 
$$
\psi_0(x)=\frac{2}{p}\sum_{k=1}^{\infty}\frac{(1+x^m)^{k-1}}{(1-x^{2m})^k}\psi_k(x).
$$
In the case when 
$$
\psi_0(x)<\frac{2}{p}\sum_{k=1}^{\infty}\frac{(1+x^m)^{k-1}}{(1-x^{2m})^k}\psi_k(x)
$$
in some interval $(R_2,R_2+\epsilon)$, the number $R_2$ cannot be improved. 
\end{theorem}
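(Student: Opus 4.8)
The plan is to follow the architecture of the proof of Theorem~\ref{6theorem1.1}, with Lemma~\ref{6Lemma1.3} (Ruscheweyh's estimate) taking the role that the coefficient bound $|a_k|\le 2(1-a)$ played there, and Lemma~\ref{6Lemma1.1} replacing Lemma~\ref{6Lemma1.2}. First I would fix $|z|=r$ and apply Lemma~\ref{6Lemma1.3} with $z$ replaced by $z^m$ (so that $|z^m|=r^m$) to each term of the derivative sum, obtaining
\[
\frac{|f^k(z^m)|}{k!}\le \frac{(1+r^m)^{k-1}}{(1-r^{2m})^k}\bigl(1-|f(z^m)|^2\bigr),\qquad k\ge 1.
\]
Writing $S(r):=\sum_{k=1}^{\infty}\frac{(1+r^m)^{k-1}}{(1-r^{2m})^k}\psi_k(r)$ and summing against the non-negative weights $\psi_k(r)$ then gives
\[
B_f(\psi,p,r,m)\le |f(z^m)|^p\,\psi_0(r)+\bigl(1-|f(z^m)|^2\bigr)\,S(r).
\]
Here $S(r)$ converges because $r^m\le r$ and $s\mapsto (1+s)^{k-1}/(1-s^2)^k=1/[(1+s)(1-s)^k]$ is increasing on $[0,1)$, so $S(r)$ is dominated term-by-term by the series assumed to converge locally uniformly in the hypothesis.

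Next I would reduce the problem to a one-variable inequality. Put $b:=|f(z^m)|\in[0,1)$; the goal becomes $b^p\psi_0(r)+(1-b^2)S(r)\le \psi_0(r)$, that is, $(1-b^2)\,S(r)\le (1-b^p)\,\psi_0(r)$. The delicate point is that $b^p$ is increasing while $1-b^2$ is decreasing in $b$, so one cannot simply insert the Schwarz--Pick bound $|f(z^m)|\le (r^m+a)/(1+a r^m)$ into the Ruscheweyh term as was done in Theorem~\ref{6theorem1.1}. Instead I would invoke the weight-free inequality $1-b^p\ge \tfrac{p}{2}(1-b^2)$, valid for $b\in[0,1]$ and $p\in(0,2]$; this is precisely the $r\to 0$ specialization of Lemma~\ref{6Lemma1.1} (and follows from the same monotonicity argument given there). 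Rewriting it as $b^p\le 1-\tfrac{p}{2}(1-b^2)$ and substituting yields
\[
B_f(\psi,p,r,m)\le \psi_0(r)-(1-b^2)\Bigl[\tfrac{p}{2}\psi_0(r)-S(r)\Bigr].
\]
By the definition of $R_2$ as the minimal positive root, the hypothesis \eqref{6eq1.3} gives $\tfrac{p}{2}\psi_0(r)-S(r)\ge 0$ for all $r\le R_2$, and since $1-b^2\ge 0$ the subtracted term is non-negative; hence $B_f(\psi,p,r,m)\le\psi_0(r)$ on $[0,R_2]$, as required.

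For the sharpness I would test the inequality on the disc automorphism $g(w)=\dfrac{a-w}{1-aw}$ with $a\in[0,1)$ and let $a\to 1^-$; this is the function for which Ruscheweyh's estimate becomes asymptotically sharp at $w=r^m$ as $a\to1^-$. A direct computation gives $g^{(k)}(w)=\dfrac{(1-a^2)a^{k-1}k!}{(1-aw)^{k+1}}$, so $\dfrac{|g^{(k)}(r^m)|}{k!}=\dfrac{(1-a^2)a^{k-1}}{(1-a r^m)^{k+1}}$. Expanding $B_g(\psi,p,r,m)$ in powers of $1-a$, using $g(r^m)^p=1-p\frac{1+r^m}{1-r^m}(1-a)+O((1-a)^2)$ and the identity $\sum_{k\ge 1}\frac{|g^{(k)}(r^m)|}{k!}\psi_k(r)=(1-a^2)\frac{1+r^m}{1-r^m}S(r)+O((1-a)^2)$ (the latter being exactly where $g$ saturates Lemma~\ref{6Lemma1.3}), I expect to obtain
\[
B_g(\psi,p,r,m)=\psi_0(r)+(1-a)\,\frac{1+r^m}{1-r^m}\bigl[2S(r)-p\,\psi_0(r)\bigr]+O\bigl((1-a)^2\bigr).
\]
On the interval $(R_2,R_2+\epsilon)$ the reverse inequality $\psi_0(r)<\tfrac{2}{p}S(r)$ makes the bracket $2S(r)-p\,\psi_0(r)$ strictly positive, so $B_g(\psi,p,r,m)>\psi_0(r)$ for $a$ sufficiently close to $1$; thus $R_2$ cannot be enlarged.

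The main obstacle, as flagged, is the opposing monotonicity of the two contributions in $b=|f(z^m)|$: the term from $|f(z^m)|^p$ favors $|f(z^m)|$ large, while the Ruscheweyh term $(1-|f(z^m)|^2)S(r)$ favors it small, which is why the Schwarz--Pick estimate driving Theorem~\ref{6theorem1.1} is no longer directly usable on the second term. Resolving this cleanly is exactly what the constant-weight inequality $1-b^p\ge\tfrac{p}{2}(1-b^2)$ accomplishes, and it is also the structural reason the threshold in \eqref{6eq1.3} carries the plain factor $\tfrac{2}{p}$ without the extra $\tfrac{1+r^m}{1-r^m}$ that appears in Theorem~\ref{6theorem1.1}. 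A secondary care-point I would verify carefully is that $g$ is the correct extremal, namely that it reproduces Ruscheweyh's bound in the limit $a\to 1^-$ with the precise factor $\frac{1+r^m}{1-r^m}S(r)$, since it is this coefficient that matches the sharpness equation defining $R_2$.
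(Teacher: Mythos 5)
Your proposal is correct and follows essentially the same route as the paper's proof: Ruscheweyh's estimate (Lemma~\ref{6Lemma1.3}) applied at $z^m$, the scalar inequality $1-b^p\ge\tfrac{p}{2}(1-b^2)$ (i.e.\ Lemma~\ref{6Lemma1.1} specialized to $r=0$), and the same extremal function $(a-z)/(1-az)$ with the identical first-order expansion in $1-a$. The only differences are cosmetic: you write the sharpness bracket as $\tfrac{1+r^m}{1-r^m}\bigl[2S(r)-p\,\psi_0(r)\bigr]$ where the paper keeps the terms $2\sum_k \psi_k(r)/(1-r^m)^{k+1}$, and you supply a (correct) domination argument for the convergence of $S(r)$ that the paper leaves implicit.
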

\begin{proof}
Given that $f\in \mathcal{B}$. Then by using Lemma \ref{6Lemma1.3} we obtain
\begin{align*}
B_f(\psi,p,r,m)&\leq |f(z^m)|^p \psi_0(r)+(1-|f(z^m)|^2)\sum_{k=1}^{\infty}\frac{(1+|z|^m)^{k-1}}{(1-|z|^{2m})^k} \psi_k(r)\\
&=\psi_0(r)+(1-|f(z^m)|^2)\Bigg[\sum_{k=1}^{\infty}\frac{(1+|z|^m)^{k-1}}{(1-|z|^{2m})^k} \psi_k(r)-\frac{(1-|f(z^m)|^p)}{(1-|f(z^m)|^2)}\psi_0(r)\Bigg].
\end{align*}
By using Lemma \ref{6Lemma1.1}, for $r=0$, it provides us 
$$
B_f(\psi,p,r,m)	\leq  \psi_0(r)+(1-|f(z^m)|^2)\Bigg[\sum_{k=1}^{\infty}\frac{(1+|z|^m)^{k-1}}{(1-|z|^{2m})^k} \psi_k(r)-\frac{p}{2}\psi_0(r)\Bigg].
$$
The equation \eqref{6eq1.3} gives
$$
B_f(\psi,p,r,m)\leq \psi_0(r),
$$
for all $r\leq R_2$. This completes the first part of the theorem.

To prove the final part we consider a function
$$
h(z)= \frac{a-z}{1-az}=a-(1-a^2)\sum_{k=1}^{\infty}a^{k-1} z^{k},
$$
where $z\in\mathbb{D}$ and $a\in [0,1)$. Then we obtain
\begin{align*}
B_f(\psi,p,r,m)=& \{h(r^m)\}^p \psi_0(r)+(1-a^2)\sum_{k=1}^{\infty}\frac{a^{k-1}}{(1-ar^m)^{k+1}} \psi_k(r)\\
=& \psi_0(r)+(1-a)\Bigg[\sum_{k=1}^{\infty}\frac{2}{(1-r^m)^{k+1}} \psi_k(r)-p\frac{1+r^m}{1-r^m}\psi_0(r)\Bigg]\\
&+(1-a)\Bigg[\sum_{k=1}^{\infty}\frac{a^{k-1}(1+a)}{(1-ar^m)^{k+1}} \psi_k(r)-\sum_{k=1}^{\infty}\frac{2}{(1-r^m)^{k+1}} \psi_k(r)\Bigg]\\
&+\Bigg[p(1-a)\frac{1+r^m}{1-r^m}+\{h(r^m)\}^p-1\Bigg]\psi_0(r)\\
=&\psi_0(r)+(1-a)\Bigg[\sum_{k=1}^{\infty}\frac{2}{(1-r^m)^{k+1}} \psi_k(r)-p\frac{1+r^m}{1-r^m}\psi_0(r)\Bigg]+O((1-a)^2)
\end{align*}
as $a$ tends to $1^-$. Also, if $a$ is close to $1$ then
$$
\sum_{k=1}^{\infty}\frac{2}{(1-r^m)^{k+1}} \psi_k(r)>p\frac{1+r^m}{1-r^m}\psi_0(r)
$$
for $r\in(R_2,R_2+\epsilon)$. This completes the proof.
\end{proof}
We can obtain several known results as  consequences of the above theorem. They are presented below.
\begin{example}
For $\psi_0=1$, $\psi_n=r^n$, $n\geq N\in\mathbb{N}$ and $\psi_n=0$, $1\leq n< N$, Theorem \ref{6theorem1.2} gives 
$$
|f(z^m)|^p +\sum_{k=N}^{\infty}\bigg|\frac{f^k(z^m)}{k!}\bigg| r^k\leq 1, \quad \mbox{ for all} \ |z|=r\leq R_8^{m,N}(p),
$$
where $R_8^{m,N}(p)$ is the minimal positive root of the equation $2x^N-p(1-x^{2m})(1-x-x^m)=0$. The radius $R_8^{m,N}(p)$ is best possible. Recently, the cases $p=1$ and $N=2$ were considered in \cite{AlKayPON-20}. Also, the situations $m=1$ and $p=1 \text{ or } 2$ were investigated in \cite{Liu18}. For $p=1,2$ and $N=5,10,15$, the values $R_8^{m,N}(p)$ are computed in Table~\ref{table2.8}.
\end{example}
\begin{table}[H]
\begin{center}
\begin{tabular}{|c|c|c|c|c|c|c|}
\hline
$m$ & $R_8^{m,5}(1)$ & $R_8^{m,10}(1)$ & $R_8^{m,15}(1)$ & $R_8^{m,5}(2)$ & $R_8^{m,10}(2)$ & $R_8^{m,15}(2)$\\
\hline
$1$ &$0.470417$&$0.498733$&$0.499959$&$0.482881$&$0.499358$&$0.49998$\\ \hline
$2$ &$0.561279$&$0.610534$&$0.617281$&$0.583333$&$0.614053$&$0.617654$\\ \hline
$3$ &$0.605857$&$0.666331$&$0.679507$&$0.634512$&$0.673433$&$0.680874$\\ \hline
$4$ &$0.632413$&$0.699984$&$0.718457$&$0.666291$&$0.710367$&$0.721294$\\ \hline
\end{tabular}
\vspace*{0.2cm}
\caption{Computation of $R_8^{m,N}(p)$ for $p=1,2$, $N=5,10,15$, and $m=1,2,3,4$}\label{table2.8}
\end{center}
\end{table}
\begin{example}
After letting $\psi_0=1$, $\psi_{2n}=0$ ($1\leq n< N$), $\psi_{2n}=r^{2n}$ ($n\geq N$) and $\psi_{2n-1}=0$ ($n\geq 1$) in Theorem \ref{6theorem1.2} we obtain
$$
|f(z^m)|^p +\sum_{k=N}^{\infty}\bigg|\frac{f^{2k}(z^m)}{(2k)!}\bigg| r^{2k}\leq 1, \quad 
\mbox{for all $\ |z|=r\leq R_9^{m,N}(p)$},
$$
where $R_9^{m,N}(p)$ is the minimal positive root of the equation 
$$
2x^{2N}-p(1+x^m)(1-x^m)^{2(N-1)}[(1-x^m)^2-x^2]=0.
$$ The radius $R_9^{m,N}(p)$ is best possible. For $p=1,2$ and $N=5,10,15$, the roots $R_9^{m,N}(p)$ are presented in Table~\ref{table2.9}.
\end{example}
\begin{table}[H]
\begin{center}
\begin{tabular}{|c|c|c|c|c|c|c|}
\hline
$m$ & $R_9^{m,5}(1)$ & $R_9^{m,10}(1)$ & $R_9^{m,15}(1)$ & $R_9^{m,5}(2)$ & $R_9^{m,10}(2)$ & $R_9^{m,15}(2)$\\
\hline
$1$ &$0.459924$&$0.470621$&$0.481132$&$0.470621$&$0.480648$&$0.485091$\\ \hline
$2$ &$0.570642$&$0.588913$&$0.596301$&$0.58335$&$0.595553$&$0.600837$\\ \hline
$3$ &$0.631077$&$0.651295$&$0.659307$&$0.644949$&$0.658391$&$0.664114$\\ \hline
$4$ &$0.670628$&$0.692296$&$0.700735$&$0.68537$&$0.699696$&$0.705709$\\ \hline
\end{tabular}
\vspace*{0.2cm}
\caption{The values $R_9^{m,N}(p)$ for $p=1,2$, $N=5,10,15$, and $m=1,2,3,4$}\label{table2.9}
\end{center}
\end{table}
\begin{example}
The choices $\psi_0=1$, $\psi_{2n-1}=0$ ($1\leq n< N$), $\psi_{2n-1}=r^{2n-1}$ ($n\geq N$) and $\psi_{2n}=0$ ($n\geq 1$) in Theorem \ref{6theorem1.2} provide
$$
|f(z^m)|^p+\sum_{k=N}^{\infty}\bigg|\frac{f^{2k-1}(z^m)}{(2k-1)!}\bigg| r^{2k-1}\leq 1, \mbox{ for all $|z|=r\leq R_{10}^{m,N}(p)$},
$$
where $R_{10}^{m,N}(p)$ is the minimal positive root of the equation 
$$2x^{2N-1}-p(1+x^m)(1-x^m)^{2N-3}[(1-x^m)^2-x^2]=0.
$$ 
The radius $R_{10}^{m,N}(p)$ is best possible. Table~\ref{table2.10} describes a few initial roots $R_{10}^{m,N}(p)$ for $p=1,2$ and $N=5,10,15$.
\end{example}
\begin{table}[H]
\begin{center}
\begin{tabular}{|c|c|c|c|c|c|c|}
\hline
$m$ & $R_{10}^{m,5}(1)$ & $R_{10}^{m,10}(1)$ & $R_{10}^{m,15}(1)$ & $R_{10}^{m,5}(2)$ & $R_{10}^{m,10}(2)$ & $R_{10}^{m,15}(2)$\\
\hline
$1$ &$0.457053$&$0.474009$&$0.480671$&$0.468802$&$0,480015$&$0.484755$\\ \hline
$2$ &$0.567068$&$0.587828$&$0.595758$&$0.581095$&$0.594794$&$0.600441$\\ \hline
$3$ &$0.627057$&$0.65011$&$0.658721$&$0.642428$&$0.657565$&$0.663687$\\ \hline
$4$ &$0.666256$&$0.691041$&$0.7001205$&$0.682652$&$0.698824$&$0.705261$\\ \hline
\end{tabular}
\vspace*{0.2cm}
\caption{Computation of $R_{10}^{m,N}(p)$ for $p=1,2$, $N=5,10,15$, and $m=1,2,3,4$}\label{table2.10}
\end{center}
\end{table}
\section{\bf Generalized Bohr-Rogosinski sum for a class of subordinations}
Suppose $h\in \mathcal{B}$ of the form $h(z)=\sum_{k=0}^{\infty} c_k z^k$. Then the Bohr-Rogosinski inequality for the function $h$ is 
$$
|h(z)|+\sum_{k=1}^{\infty} c_k r^k \leq 1, \ |z|=r.
$$
We can rewrite the above inequality as 
$$
\sum_{k=1}^{\infty} c_k r^k \leq 1-|h(z)|
={\rm dist}\,(h(z),\partial \mathbb{D}), \ |z|=r.
$$
The above inequality can be studied for the generalized class of functions $f$ which is analytic in $\mathbb{D}$ and  $f(\mathbb{D})=\Omega$, for a given domain $\Omega$. To go further,  we recall the definition of subordination here. 

Suppose $f$ and $g$ are analytic functions in $\mathbb{D}$. We say that $g$ is subordinate to $f$, or $g\prec f$, if there is an analytic function 
$w:\mathbb{D}\rightarrow \mathbb{D}$ with $w(0)=0$ such that $g=f\circ w$. 
Note that if $f$ is univalent then the condition $g\prec f$ is equivalent 
to the conditions $f(0)=g(0)$ and $\{g(z):|z|<r\}\subset \{f(z):|z|<r\}$, $r\leq 1$. 
To know more about subordination, reader can refer to \cite{Duren83,MM-Book,Pom-Book}.

In \cite{Abu}, Abu-Muhanna studied the Bohr sum for the class $S(f):=\{g:g\prec f\}$, where $f$ is a univalent function and $f(\mathbb{D})=\Omega$. Recently, Kayumov et al. \cite{Kayponn21} find a radius $R$ for which a generalization of the Bohr-Rogosinski inequality, for the function $g(z)=\sum_{k=0}^{\infty} b_k z^k\in S(f)$,
$$
|g(z^m)|+\sum_{k=N}^{\infty} b_k r^k \leq |f(0)|+ \mbox{dist}\,(f(0),\partial \Omega), \ |z|=r \leq R \mbox{ and } m,N \in \mathbb{N}
$$
holds. More about this result will be discussed in the list of consequences of the following result:   
\begin{theorem}\label{6theorem3.1}
Let $\{\varphi_k(r)\}_{k=1}^{\infty}$ be a sequence of non-negative continuous functions in $[0,1)$ such that the series 
$$
\sum_{k=1}^{\infty}k\varphi_k(r)
$$
converges locally uniformly with respect to $r\in[0,1)$. 
Assume that $f$ and $g$ are analytic in $\mathbb{D}$ such that $f$ is univalent in $\mathbb{D}$ and $g(z)=\sum_{k=0}^{\infty} b_k z^k\in S(f)$. If
\begin{equation}\label{6eq3.2}
\sum_{k=1}^{\infty}k\varphi_k(r)+\frac{r^m}{(1-r^m)^2}<\frac{1}{4}, \ m\in \mathbb{N}
\end{equation}
then the following sharp inequality holds:
$$
C_f(\varphi,r,m):=|g(z^m)|+\sum_{k=1}^{\infty}|b_k|\varphi_k(r)\leq |f(0)|+ \mbox{dist}\,(f(0), \partial \Omega), \mbox{ for all } |z|=r\leq R_3,
$$
where $R_3$ is the minimal positive root of the equation 
$$
\sum_{k=1}^{\infty}k\varphi_k(x)+\frac{x^m}{(1-x^m)^2}=\frac{1}{4}.
$$
In the case when 
$$
\sum_{k=1}^{\infty}k\varphi_k(x)+\frac{x^m}{(1-x^m)^2}>\frac{1}{4}
$$
in some interval $(R_3,R_3+\epsilon)$, the number $R_3$ cannot be improved. 
\end{theorem}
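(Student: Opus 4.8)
The plan is to reduce the whole statement to two classical facts about univalent functions and then to check that the constants match. Since $g\in S(f)$ with $f$ univalent, I would first write $g=f\circ w$ for a Schwarz function $w$ with $w(0)=0$; in particular $b_0=g(0)=f(0)$, and with $d:=\mathrm{dist}(f(0),\partial\Omega)$ the target reads $C_f(\varphi,r,m)\le |f(0)|+d$. The first ingredient I would invoke is the Koebe one-quarter theorem applied to the normalized univalent map $(f(z)-f(0))/f'(0)$: it shows that $\Omega$ contains the disk of radius $|f'(0)|/4$ centered at $f(0)$, hence $|f'(0)|\le 4d$. The second ingredient is the coefficient estimate for subordinate functions: since $(g(z)-f(0))/f'(0)$ is subordinate to the normalized univalent function $(f(z)-f(0))/f'(0)$, Rogosinski's coefficient inequality (a consequence of the de Branges theorem, see \cite{Duren83}) gives $|b_k|\le k\,|f'(0)|\le 4kd$ for every $k\ge 1$.

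With these two bounds the estimate is routine. First I would control the functional value through its Taylor series,
\[
|g(z^m)|\le |b_0|+\sum_{k=1}^{\infty}|b_k|\,r^{mk}\le |f(0)|+4d\sum_{k=1}^{\infty}k\,r^{mk}=|f(0)|+4d\,\frac{r^m}{(1-r^m)^2},
\]
using $\sum_{k\ge 1}k t^{k}=t/(1-t)^2$ with $t=r^m$. The tail is then controlled termwise, $\sum_{k=1}^{\infty}|b_k|\varphi_k(r)\le 4d\sum_{k=1}^{\infty}k\varphi_k(r)$. Adding the two gives
\[
C_f(\varphi,r,m)\le |f(0)|+4d\left[\frac{r^m}{(1-r^m)^2}+\sum_{k=1}^{\infty}k\varphi_k(r)\right].
\]
The hypothesis \eqref{6eq3.2} forces the bracketed quantity to stay below $\tfrac14$ on $[0,R_3)$ and to equal $\tfrac14$ at $R_3$ by continuity, so the right-hand side collapses to $|f(0)|+d$, which establishes the inequality for $|z|=r\le R_3$.

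For sharpness I would display the extremal pair that attains equality in both classical bounds at once. Take $f$ to be the Koebe function $f(z)=z/(1-z)^2$ and $g=f$ (so $w(z)=z$ and $g\in S(f)$ trivially); here $f(0)=0$, $f'(0)=1$, $\Omega=\mathbb{C}\setminus(-\infty,-1/4]$ so that $d=1/4$, and $b_k=k$. Evaluating at the real point $z=r$, where $z^m=r^m$ and $g(r^m)=r^m/(1-r^m)^2>0$, yields exactly
\[
C_f(\varphi,r,m)=\frac{r^m}{(1-r^m)^2}+\sum_{k=1}^{\infty}k\varphi_k(r),
\]
whereas $|f(0)|+d=1/4$. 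Hence on any interval $(R_3,R_3+\epsilon)$ on which the displayed quantity exceeds $\tfrac14$ the inequality fails, so $R_3$ cannot be enlarged.

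The deep and essentially only nontrivial input is the subordinate-coefficient bound $|b_k|\le k|f'(0)|$, which rests on the de Branges theorem; granting it together with the Koebe estimate, the factor $4$ coming from Koebe and the factor $\tfrac14$ coming from \eqref{6eq3.2} cancel precisely. The one point requiring genuine care is to verify that these two classical inequalities are \emph{simultaneously} saturated by the Koebe function (indeed $b_k=k|f'(0)|$ and $d=|f'(0)|/4$ there), for it is exactly this coincidence that pins down the constant $\tfrac14$ and makes $R_3$ optimal.
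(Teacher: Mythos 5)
Your proof is correct and follows essentially the same route as the paper's: both rest on the Koebe one-quarter estimate $|f'(0)|\le 4\,\mathrm{dist}(f(0),\partial\Omega)$, the subordination coefficient bound $|b_k|\le k|f'(0)|$, and the Koebe function for sharpness. The only cosmetic difference is that you bound $|g(z^m)|$ termwise via $\sum_{k\ge 1}k r^{mk}=r^m/(1-r^m)^2$, while the paper cites the growth theorem for $|g(z)-g(0)|$ -- the resulting estimate is identical.
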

\begin{proof} The univalent condition on function $f$ provides us the well-known inequality 
\begin{equation}\label{6eq3.3}
\frac{1}{4} |f'(z)|(1-|z|^2)\leq \mbox{dist}\,(f(z), \partial \Omega)\leq |f'(z)|(1-|z|^2), \mbox{ for all } z\in \mathbb{D}, 
\end{equation}
see, for instance \cite{Abu,Duren83,Pom-Book2}. Also, the assumption $g(z)=\sum_{k=0}^{\infty} b_k z^k \prec f(z)$ gives $|b_k|\leq k|f'(0)|$, for all $k\in \mathbb{N}$. Then, by using the inequality \eqref{6eq3.3}, we have $|b_k|\leq 4k\mbox{dist}\,(f(0), \partial \Omega)$. It follows that 
$$
C_f(\varphi,r,m)\leq |g(z^m)|+4\mbox{dist}\,(f(0), \partial \Omega)\sum_{k=1}^{\infty}k\varphi_k(r).
$$	
The condition $g\prec f$ and the growth theorem \cite[Theorem 2.6]{Duren83} lead to the fact that 
$$
|g(z)-g(0)|\leq |f'(0)|\frac{r}{(1-r)^2}, \ |z|=r.
$$
Moreover, the inequality \eqref{6eq3.3} gives
$$
|g(z)|\leq |f(0)|+ \mbox{dist}\,(f(0), \partial \Omega) \frac{4r}{(1-r)^2}, \ |z|=r.
$$
Then we obtain
\begin{align*}
C_f(\varphi,r,m)&\leq |f(0)|+ \mbox{dist}\,(f(0), \partial \Omega) \frac{4r^m}{(1-r^m)^2}+4\mbox{dist}\,(f(0), \partial \Omega)\sum_{k=1}^{\infty}k\varphi_k(r)\\
&=|f(0)|+ \mbox{dist}\,(f(0), \partial \Omega)+4\mbox{dist}\,(f(0), \partial \Omega) \Bigg[\frac{r^m}{(1-r^m)^2}+\sum_{k=1}^{\infty}k\varphi_k(r)-\frac{1}{4}\Bigg].
\end{align*}
By using the inequality \eqref{6eq3.2} we have
$$
C_f(\varphi,r,m)\leq |f(0)|+ \mbox{dist}\,(f(0), \partial \Omega), \mbox{ for all } r\leq R_3.
$$
The choice of the function 
$$
f(z)=\frac{z}{(1-z)^2}, \ z\in \mathbb{D}
$$
gives {\rm dist}\,($f(0),  \partial \Omega$)= $1/4$. Also, we have 
$$
f(r^m)+\sum_{k=1}^{\infty}k\varphi_k(r)= \frac{r^m}{(1-r^m)^2}+\sum_{k=1}^{\infty}k\varphi_k(r)>\frac{1}{4}=|f(0)|+ \mbox{dist}\,(f(0), \partial \Omega),
$$
for $r\in(R_3,R_3+\epsilon)$. This gives that we can not improve $R_3$.
\end{proof}
\begin{remark}
For $m\to \infty$, Theorem \ref{6theorem3.1} gives that: if 
$$
\sum_{k=1}^{\infty}k\varphi_k(r)<\frac{1}{4}
$$
then the following sharp inequality holds:
$$
\sum_{k=1}^{\infty}|b_k|\varphi_k(r)\leq \mbox{dist}\,(f(0), \partial \Omega), \mbox{ for all } |z|=r\leq R_4,
$$
where $R_4$ is the minimal positive root of the equation 
$$
\sum_{k=1}^{\infty}k\varphi_k(x)=\frac{1}{4}.
$$
In the case when 
$$
\sum_{k=1}^{\infty}k\varphi_k(x)>\frac{1}{4}
$$
in some interval $(R_4,R_4+\epsilon)$, the number $R_4$ cannot be improved. Now, the particular choices of the functions $\varphi_k(r)=r^k$ give a result of \cite{Abu}.	
\end{remark}
\begin{example}
For $N\in \mathbb{N}$, the choices $\varphi_k(r)=0$, for $1\leq k<N$, and $\varphi_k(r)=r^k$, for $k\geq N$, in Theorem \ref{6theorem3.1} give
$$
|g(z^m)|+\sum_{k=N}^{\infty}|b_k|r^k\leq |f(0)|+ \mbox{dist}\,(f(0), \partial \Omega), \mbox{ for all } |z|=r\leq R_{11}^{m,N},
$$
where $g\in S(f)$ and $R_{11}^{m,N}$ is the positive root of the equation
$$
4x^m-(1-x^m)^2+4x^N[N(1-x)+x]\bigg(\frac{1-x^m}{1-x}\bigg)^2=0.
$$
The radius $R_{11}^{m,N}$ is best possible. This result is proved in \cite{Kayponn21}. For $m=1,2,3,4$ and $N=5,10,15$, the roots $R_{11}^{m,N}$ are presented in Table~\ref{table3.11}.
\end{example}
\begin{table}[H]
\begin{center}
\begin{tabular}{|c|c|c|c|}
\hline
$m$ & $R_{11}^{m,5}$ & $R_{11}^{m,10}$ & $R_{11}^{m,15}$ \\
\hline
$1$ &$0.171125$&$0.171573$&$0.171573$\\ \hline
$2$ &$0.372068$&$0.412677$&$0.414185$\\ \hline
$3$ &$0.432697$&$0.531244$&$0.553009$\\ \hline
$4$ &$0.453269$&$0.576975$&$0.624641$\\ \hline
\end{tabular}
\vspace*{0.2cm}
\caption{Computation of $R_{11}^{m,N}$ for $N=5,10,15$ and $m=1,2,3,4$}\label{table3.11}
\end{center}
\end{table}
\begin{example}
For $k\in \mathbb{N}$, the settings $\varphi_{2k-1}(r)=0$ and $\varphi_{2k}(r)=r^{2k}$ in Theorem \ref{6theorem3.1} give
$$
|g(z^m)|+\sum_{k=1}^{\infty}|b_{2k}|r^{2k}\leq |f(0)|+ \mbox{dist}\,(f(0), \partial \Omega), \mbox{ for all } |z|=r\leq R_{12}^{m},
$$
where $g\in S(f)$ and $R_{12}^{m}$ is the minimal positive root of the equation
\begin{equation}\label{6eq3.4}
\frac{2x^2}{(1-x^2)^2} + \frac{x^m}{(1-x^m)^2}=\frac{1}{4}.
\end{equation}
The radius $R_{12}^{m}$ is best possible. Table~\ref{table4} listed the values of $R_{12}^{m}$ for certain choices of $m$.
\end{example}
\begin{table}[H]
	\begin{tabular}{|c|c|c|c|} 
		\hline
		$m$ & $R_{12}^{m}$ \\
		\hline
		$1$ & $0.14813$ \\ \hline
		$2$ & $0.26795$ \\ \hline
		$3$ & $0.30200$ \\ \hline
		$4$ & $0.31270$ \\
		\hline
	\end{tabular}
	\vspace*{0.2cm}
	\caption{Values of $R_{12}^{m}$ for $m=1,2,3,4$} \label{table4}
\end{table}
\begin{example}
Let $\varphi_{2k}(r)=0$ and $\varphi_{2k-1}(r)=r^{2k-1}$, $k\in\mathbb{N}$. Then Theorem \ref{6theorem3.1} gives
$$
|g(z^m)|+\sum_{k=1}^{\infty}|b_{2k-1}|r^{2k-1}\leq |f(0)|+ \mbox{dist}\,(f(0), \partial \Omega), \mbox{ for all } |z|=r\leq R_{13}^{m},
$$
where $g\in S(f)$ and $R_{13}^{m}$ is the minimal positive root of the equation
\begin{equation}\label{6eq3.5}
\frac{x^3(3-x^2)}{(1-x^2)^2} + \frac{x^m}{(1-x^m)^2}=\frac{1}{4}.
\end{equation}
The radius $R_{13}^{m}$ is best possible. For $m=1,2,3,4$, the values of $R_{13}^{m}$ are presented in Table~\ref{table5}.
\end{example}
\begin{table}[H]
	\begin{tabular}{|c|c|c|c|} 
		\hline
		$m$ & $R_{13}^{m}$ \\
		\hline
		$1$ & $0.164662$ \\ \hline
		$2$ & $0.322256$ \\ \hline
		$3$ & $0.369627$ \\ \hline
		$4$ & $0.386157$ \\
		\hline
	\end{tabular}
	\vspace*{0.2cm}
	\caption{The values of $R_{13}^{m}$ for $m=1,2,3,4.$} \label{table5}
\end{table}
In Theorem \ref{6theorem3.1}, if we add convexity condition on the  function $f$ then we have the following:
\begin{theorem}\label{6theorem3.2}
Let $\{\lambda_k(r)\}_{k=1}^{\infty}$ be a sequence of non-negative continuous functions in $[0,1)$ such that the series 
$$
\sum_{k=1}^{\infty}\lambda_k(r)
$$
converges locally uniformly with respect to $r\in[0,1)$. 
Assume that $f$ and $g$ are analytic in $\mathbb{D}$ such that $f$ is  convex univalent in $\mathbb{D}$ and $g\in S(f)$. If
$$
\sum_{k=1}^{\infty}\lambda_k(r)+\frac{r^m}{1-r^m}<\frac{1}{2}
$$
then the following sharp inequality holds:
$$
D_f(\lambda,r,m):=|g(z^m)|+\sum_{k=1}^{\infty}|b_k|\lambda_k(r)\leq |f(0)|+ {\rm dist}\,(f(0), \partial \Omega), \mbox{ for all } |z|=r\leq R_5,
$$
where $R_5$ is the minimal positive root of the equation 
$$
\sum_{k=1}^{\infty}\lambda_k(x)+\frac{x^m}{1-x^m}=\frac{1}{2}.
$$
In the case when 
$$
\sum_{k=1}^{\infty}\lambda_k(x)+\frac{x^m}{1-x^m}>\frac{1}{2}
$$
in some interval $(R_5,R_5+\epsilon)$, the number $R_5$ cannot be improved. 
\end{theorem}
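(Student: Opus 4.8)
The plan is to mirror the proof of Theorem \ref{6theorem3.1}, replacing each univalent estimate by its sharper convex counterpart. The three ingredients I would assemble are: (i) the coefficient bound for subordination to a convex function, namely $|b_k|\leq |f'(0)|$ for all $k\geq 1$ (with no factor of $k$, in contrast to the bound $|b_k|\leq k|f'(0)|$ used in the univalent case); (ii) the convex Koebe-type estimate $\mbox{dist}\,(f(0),\partial\Omega)\geq \tfrac{1}{2}|f'(0)|$, equivalently $|f'(0)|\leq 2\,\mbox{dist}\,(f(0),\partial\Omega)$; and (iii) the growth theorem for convex functions, which together with $g\prec f$ and the Schwarz lemma yields $|g(z)-g(0)|\leq |f'(0)|\,r/(1-r)$ for $|z|=r$. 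These are precisely the convex analogues of the inequality \eqref{6eq3.3}, the coefficient bound, and the growth estimate used earlier, and they account for the threshold $1/2$ replacing $1/4$ and for the disappearance of the weight $k$ in the hypothesis.

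First I would combine (i) and (ii) to obtain $|b_k|\leq 2\,\mbox{dist}\,(f(0),\partial\Omega)$, so that
$$
D_f(\lambda,r,m)\leq |g(z^m)|+2\,\mbox{dist}\,(f(0),\partial\Omega)\sum_{k=1}^{\infty}\lambda_k(r).
$$
Next, from (iii) and (ii) I would bound the leading term by $|g(z^m)|\leq |f(0)|+2\,\mbox{dist}\,(f(0),\partial\Omega)\,r^m/(1-r^m)$. Substituting and factoring out $2\,\mbox{dist}\,(f(0),\partial\Omega)$ produces
$$
D_f(\lambda,r,m)\leq |f(0)|+\mbox{dist}\,(f(0),\partial\Omega)+2\,\mbox{dist}\,(f(0),\partial\Omega)\Bigg[\frac{r^m}{1-r^m}+\sum_{k=1}^{\infty}\lambda_k(r)-\frac{1}{2}\Bigg].
$$
The hypothesis forces the bracketed quantity to be nonpositive for $r\leq R_5$, which yields the asserted inequality.

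For sharpness I would take the half-plane map $f(z)=z/(1-z)$, which is convex univalent with $f(0)=0$, $f'(0)=1$ and $\Omega=\{w:{\rm Re}\,w>-1/2\}$, so that $\mbox{dist}\,(f(0),\partial\Omega)=1/2$ and hence $|f(0)|+\mbox{dist}\,(f(0),\partial\Omega)=1/2$. Choosing $g=f=\sum_{k\geq 1}z^k$ (so that $b_k=1$ for every $k$) gives $D_f(\lambda,r,m)=r^m/(1-r^m)+\sum_{k\geq 1}\lambda_k(r)$, which exceeds $1/2$ precisely on the interval $(R_5,R_5+\epsilon)$ where the reversed hypothesis holds; this shows that $R_5$ cannot be improved.

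The step I expect to require the most care is the justification of the convex coefficient bound $|b_k|\leq|f'(0)|$ and the convex growth estimate $|g(z)-g(0)|\leq|f'(0)|\,r/(1-r)$. Both are classical (the former is Rogosinski's coefficient theorem for functions subordinate to a convex map, the latter the growth theorem for convex univalent functions), so the real task is to invoke them correctly and then to verify that equality in all three ingredients is attained \emph{simultaneously} by the half-plane map, which is what secures the sharpness of $R_5$.
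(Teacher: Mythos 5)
Your proposal follows essentially the same route as the paper: the convex distortion inequality $\tfrac12|f'(0)|\leq \mbox{dist}\,(f(0),\partial\Omega)$, the coefficient bound $|b_k|\leq |f'(0)|$, the growth estimate $|g(z)-g(0)|\leq |f'(0)|\,r/(1-r)$, and sharpness via the half-plane map $f(z)=z/(1-z)$ are exactly the ingredients the paper uses (its proof merely states them and defers the assembly to the argument of Theorem \ref{6theorem3.1}, which you have correctly reproduced). Your write-up is in fact more explicit than the paper's about how the pieces combine and why the extremal function saturates all three estimates simultaneously.
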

\begin{proof}
Given that $f$ is convex univalent function which gives us the well-known inequality 
$$
\frac{1}{2} |f'(z)|(1-|z|^2)\leq \mbox{dist}\,(f(z), \partial \Omega)\leq |f'(z)|(1-|z|^2), \mbox{ for all } z\in \mathbb{D}, 
$$
see, for instance \cite{Abu,Duren83}. Further, the assumption $g(z)=\sum_{k=0}^{\infty} b_k z^k \prec f(z)$ gives $|b_k|\leq |f'(0)|$, for all $k\in \mathbb{N}$, and
$$
|g(z)-g(0)|\leq |f'(0)|\frac{r}{1-r}.
$$
Rest of the proof follows similar to Theorem \ref{6theorem3.1}. The convex function $f(z)=z/(1-z)$ have \mbox{dist}\,$(f(z), \partial \Omega)=1/2$, $z\in\mathbb{D}$. Moreover, this function gives sharpness of the result.   
\end{proof}
\begin{remark}
Let $m$ tend to $\infty$ in Theorem \ref{6theorem3.2}. Then one can easily observe that: if
$$
\sum_{k=1}^{\infty}\lambda_k(r)<\frac{1}{2}
$$
then the following sharp inequality holds:
$$
\sum_{k=1}^{\infty}|b_k|\lambda_k(r)\leq \mbox{dist}\,(f(0), \partial \Omega), \mbox{ for all } |z|=r\leq R_6,
$$
where $R_6$ is the minimal positive root of the equation 
$$
\sum_{k=1}^{\infty}\lambda_k(x)=\frac{1}{2}.
$$
In the case when 
$$
\sum_{k=1}^{\infty}\lambda_k(x)>\frac{1}{2}
$$
in some interval $(R_6,R_6+\epsilon)$, the number $R_6$ cannot be improved. Further, the choice of the functions $\varphi_k(r)=r^k$ gives a known result, which is proved in \cite{Abu}.	
\end{remark}
\begin{example}
If we consider $\lambda_k(r)=0$, for $1\leq k<N$, and $\lambda_k(r)=r^k$, for $k\geq N$, in Theorem \ref{6theorem3.2}. Then we have
$$
|g(z^m)|+\sum_{k=N}^{\infty}|b_k|r^k\leq |f(0)|+ \mbox{dist}\,(f(0), \partial \Omega), \mbox{ for all } |z|=r\leq R_{14}^{m,N},
$$
where $g\in S(f)$ and $R_{14}^{m,N}$ is the positive root of the equation
$$
3x^m-1+2x^N\bigg(\frac{1-x^m}{1-x}\bigg)=0.
$$
The radius $R_{14}^{m,N}$ is best possible. This result is studied in \cite{Kayponn21}. The roots $R_{14}^{m,N}$, for $m=1,2,3,4$ and $N=5,10,15$, are given in Table~\ref{table3.14}.
\end{example}
\begin{table}[H]
\begin{center}
\begin{tabular}{|c|c|c|c|}
\hline
$m$ & $R_{14}^{m,5}$ & $R_{14}^{m,10}$ & $R_{14}^{m,15}$ \\
\hline
$1$ &$0.330697$&$0.333322$&$0.333333$\\ \hline
$2$ &$0.536482$&$0.573823$&$0.577111$\\ \hline
$3$ &$0.607547$&$0.673834$&$0.689549$\\ \hline
$4$ &$0.640031$&$0.719763$&$0.746595$\\ \hline
\end{tabular}
\vspace*{0.2cm}
\caption{Values of $R_{14}^{m,N}$ for $N=5,10,15$, and $m=1,2,3,4$}\label{table3.14}
\end{center}
\end{table}
\begin{example}
Let $\lambda_{2k-1}(r)=0$ and $\lambda_{2k}(r)=r^{2k}$, $k\in\mathbb{N}$ in Theorem \ref{6theorem3.2}. Then we obtain
$$
|g(z^m)|+\sum_{k=1}^{\infty}|b_{2k}|r^{2k}\leq |f(0)|+ \mbox{dist}\,(f(0), \partial \Omega), \mbox{ for all } |z|=r\leq R_{15}^{m},
$$
where $g\in S(f)$ and $R_{15}^{m}$ is the positive root of the equation
\begin{equation}\label{6eq3.6}
\frac{x^2}{1-x^2} + \frac{x^m}{1-x^m}=\frac{1}{2}.
\end{equation}
The radius $R_{15}^{m}$ is best possible.
Table~\ref{table6} computes the values $R_{15}^{m}$ for $m=1,2,3,4.$
\end{example}
\begin{table}[H]
	\begin{tabular}{|c|c|c|c|} 
		\hline
		$m$ & $R_{15}^{m}$ \\
		\hline
		$1$ & $0.28990$ \\ \hline
		$2$ & $0.44721$ \\ \hline
		$3$ & $0.50845$ \\ \hline
		$4$ & $0.53842$ \\
		\hline
	\end{tabular}
	\vspace*{0.2cm}
	\caption{Computation of $R_{15}^{m}$ for $m=1,2,3,4.$} \label{table6}
\end{table}
\begin{example}
For $k\in \mathbb{N}$, the settings $\lambda_{2k}(r)=0$ and $\lambda_{2k-1}(r)=r^{2k-1}$ in Theorem \ref{6theorem3.2} give
$$
|g(z^m)|+\sum_{k=1}^{\infty}|b_{2k-1}|r^{2k-1}\leq |f(0)|+ \mbox{dist}\,(f(0), \partial \Omega), \mbox{ for all } |z|=r\leq R_{16}^{m},
$$
where $g\in S(f)$ and $R_{16}^{m}$ is the positive root of the equation
\begin{equation}\label{6eq3.7}
\frac{x}{1-x^2} + \frac{x^m}{1-x^m}=\frac{1}{2}.
\end{equation}
The radius $R_{16}^{m}$ is best possible. Computation of $R_{16}^{m}$, for $m=1,2,3,4$, is given in Table~\ref{table7}.
\end{example}
\begin{table}[H]
	\begin{tabular}{|c|c|c|c|} 
		\hline
		$m$ & $R_{16}^{m}$ \\
		\hline
		$1$ & $0.21525$ \\ \hline
		$2$ & $0.33333$ \\ \hline
		$3$ & $0.37893$ \\ \hline
		$4$ & $0.39871$ \\
		\hline
	\end{tabular}
	\vspace*{0.2cm}
	\caption{The roots $R_{16}^{m}$ for $m=1,2,3,4.$} \label{table7}
\end{table}

\bigskip
\noindent
{\bf Acknowledgement.} 
The authors would like to thank Professor S. Ponnusamy for many fruitful discussions on this topic. The work of the first author is supported by CSIR, New Delhi (Grant No: 09/1022(0034)/2017-EMR-I).

\medskip
\noindent
{\bf Conflict of Interests.} The authors declare that there is no conflict of interests 
regarding the publication of this paper.

\end{document}